\documentclass[12pt]{article}
\usepackage[pdftex]{graphicx}
\usepackage{amsmath}
\usepackage{amssymb}
\usepackage{amsthm} 

\newtheorem{thm}{Theorem}[section] 

\newtheorem{con}[thm]{Conjecture}

\newtheorem{lem}[thm]{Lemma}

\newtheorem{prop}[thm]{Proposition}


\title{The Maximum of the Maximum Rectilinear Crossing Numbers of $d$-regular Graphs of Order $n$}


\author{Matthew Alpert\footnote{Lawrence High School, Cedarhurst, NY, USA. \textit{E-mail address: }\texttt{mna851@aol.com}}, Elie Feder\footnote{Department of Mathematics and Computer Science, Kingsborough Community College-CUNY, Brooklyn, NY, USA. \textit{E-mail address: }\texttt{efeder@kbcc.cuny.edu}}, and Heiko Harborth\footnote{Diskrete Mathematik, Technische Universitaet, Braunschweig, Germany. \textit{E-mail address: }\texttt{H.Harborth@tu-bs.de}}}

\begin{document}
\maketitle

\begin{abstract}
We extend known results regarding the maximum rectilinear crossing number of the {\it cycle graph} ($C_n$) and the {\it complete graph} ($K_n$) to the class of general {\it d-regular graphs} $R_{n,d}$. We present the {\it generalized star} drawings of the $d$-regular graphs  $S_{n,d}$ of order $n$ where $n+d\equiv 1 \pmod 2 $ and prove that they maximize the maximum rectilinear crossing numbers. A {\it star-like} drawing of $S_{n,d}$ for $n \equiv d \equiv 0 \pmod 2$ is introduced and we conjecture that this drawing maximizes the maximum rectilinear crossing numbers, too. We offer a simpler proof of two results initially proved by Furry and Kleitman \cite{cycle1} as partial results in the direction of this conjecture.
\end{abstract}

\section{Introduction}
Let $G$ be an abstract graph with vertex set $V(G)$ and edge set
$E(G)\subset V(G) \times V(G)$. The \textit{order} of a graph $G$ is
defined as the cardinality of $V(G)$.  A \textit{drawing} of the graph $G$ is a representation of $G$ in
the plane such that the elements of $V(G)$ correspond to points in the plane,
and the elements of $E(G)$ correspond to continuous arcs connecting two vertices and having at most one point in common, either a vertexpoint or a crossing. A
\textit{rectilinear drawing} is a drawing of a graph in which all
edges are represented as straight line segments in the plane.

The \textit{degree} of a vertex $v \in V(G)$ is defined as the number of edges in $E(G)$ containing $v$
as an endpoint.  If all vertices of a graph have the same degree,
then the graph is called \textit{regular}. Specifically, if all the
vertices have degree $d$, the graph is called $d$\textit{-regular}.
The \textit{cycle} $C_n$ is a connected $2$-regular graph. The
\textit{complete graph} $K_n$ is a graph on $n$ vertices, in which
any two vertices are connected by an edge, or equivalently an
$(n-1)$-regular graph. The class of $d$-regular graphs of order $n$
will be denoted $R_{n,d}$.

In a drawing of a graph, a \textit{crossing} is defined to be the
intersection of exactly two edges not at a vertex.  The \textit{crossing number} of an abstract graph, $G$,
 denoted ${\rm cr}(G)$, is defined as the minimum number of edge crossings over all nonisomorphic
drawings of $G$.  The \textit{minimum rectilinear crossing
number} of a graph $G$, denoted $\overline{\rm cr}(G)$, is defined as
the minimum number of edge crossings over all nonisomorphic
rectilinear drawings of $G$.

Analogously, the \textit{maximum crossing number}, denoted by ${\rm CR}(G)$, is
defined as the maximum value of edge crossings over all nonisomorphic
drawings of $G$.  The \textit{maximum rectilinear crossing
number} of a graph $G$, denoted by $\overline{\rm CR}(G)$, is
defined to be the maximum number of edge crossings over all
nonisomorphic rectilinear drawings of $G$.  Throughout this paper we will also define $\overline{\rm CR}(R_{n,d})$ to be the maximum of the maximum rectilinear crossing numbers throughout the class of graphs.

The maximum crossing number and maximum rectilinear crossing number
have been studied for several classes of graphs (see \cite{maxrect1}, \cite{max1}, \cite{max2}, \cite{max3}, \cite{max4}).  Most relevant
to this paper are studies of the maximum rectilinear crossing number
of $C_n$ (a $2$-regular graph) and of $K_n$ ($(n-1)$-regular graph).
In \cite{complete1} it is shown that
$$\overline{\rm CR}(K_n)=\overline{\rm CR}(R_{n,n-1})=\binom{n}{4}.$$
In \cite{cycle1}, \cite{cycle2} it is proved that
$$ \overline{\rm CR}(C_n)= \begin{cases}
\frac{1}{2}n(n-3) & \text{if $n$ is odd,} \\
\frac{1}{2}n(n-4) + 1 & \text{if $n$ is even.}
\end{cases} $$

This paper makes a natural generalization from these two results.
Namely, it finds an expression for the maximum $\overline{\rm CR}(R_{n,d})$ of all maximum rectilinear crossing
numbers for the class $R_{n,d}$ of all $d$-regular graphs of order $n$, where $2 \leq d \leq
n-1$.  We present a {\it star-like} drawing of a $d$-regular graph $S_{n,d}$ for $n$ and $d$ of different parity and prove that it maximizes the maximum rectilinear crossing numbers.  A {\it star-like} drawing of the $d$-regular graph $S_{n,d}$ for even $n$ and $d$ is introduced and we conjecture that this drawing maximizes the maximum rectilinear crossing numbers offering proofs for $d=2$ and $d=n-2$ as partial results in the direction of this conjecture.  

We present here an interesting method of generalizing the maximum
rectilinear crossing number of $C_n$ and $K_n$ to the more general class $R_{n,d}$ of $d$-regular graphs of order $n$.  Finding the minimum rectilinear crossing number of the complete graph, $K_n$, is a well-known and widely-investigated open problem in computational geometry.
For $n<17$, $\overline{\rm cr}(K_n)$ is known, and for $n \geq 18$ only bounds are known (see \cite{min1}, \cite{min2}, \cite{min3}). Perhaps future research can investigate  $\overline{\rm cr}(R_{n,d})$ where $d<n-1$ as a tool to gain insight into the minimum rectilinear crossing number of $K_n$.

\medskip

In Sections 2.1 and 2.2 we outline the construction of the {\it generalized star-like} drawings of $S_{n,d}$ and present a lower bound for $\overline{\rm CR}(R_{n,d})$.  In Section 3.1 we present an upper bound for $\overline{\rm CR}(R_{n,d})$, where $n+d\equiv 1 \pmod 2$, and note that the {\it star-like} drawing of $S_{n,d}$ attains this maximum.  In Section 3.2 we conjecture the upper bound of $\overline{\rm CR}(R_{n,d})$ where $n \equiv d \equiv 0 \pmod 2$ and offer a partial result in the direction of this conjecture by proving its validity for the case $d=2$.  In Section 3.3 we offer simpler proofs of the maximum crossing number of $C_n$ and of $\overline{\rm CR}(R_{n,2})$ where $n \equiv 0 \pmod 2$ than those of Furry and Kleitman \cite{cycle1} and in Section 3.4 we remark on this paper's generalization of previous results.  Section 3.5 contains some computational results regarding  $\overline{\rm CR}(R_{n,d})$.

\section{Lower Bounds of $\overline{\rm CR}(R_{n,d})$}

We first note that there is no $d$-regular graph of order $n$ where $n$ and $d$ are both odd since the number $nd$ of endvertices cannot twice count the number of edges.  Thus, we will only consider the two cases $n+d \equiv 1 \pmod 2$ and $n \equiv d \equiv 0 \pmod 2$.  

\subsection{Lower bound of $\overline{CR}(R_{n,d})$ where $n+d \equiv 1 \pmod 2$} 

The number of crossings in a special rectilinear {\it star-like} drawing implies the following lower bound. 

\begin{prop}
$$ \overline{CR}(R_{n,d}) \geq \frac{nd}{24}(3nd-2d^2-6d+2)$$ if $n+d \equiv 1 \pmod 2$.
\end{prop}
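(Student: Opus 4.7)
The plan is to exhibit an explicit rectilinear drawing of a specific $d$-regular graph $S_{n,d}$ and count its crossings, showing that this count equals $\frac{nd}{24}(3nd-2d^2-6d+2)$.

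\textbf{Construction.} I would place the vertices $v_0, v_1, \ldots, v_{n-1}$ at the corners of a regular $n$-gon and define edges by a symmetric difference set $K \subseteq \mathbb{Z}_n$ of size $d$, i.e., $v_iv_j$ is an edge iff $j-i \pmod n \in K$. To maximize crossings one wants the chords to be as long as possible, so $K$ should consist of the $d$ differences whose cyclic distance to $0$ is largest. Concretely: if $d$ is even (so $n$ is odd), take $K = \{\pm k : \lfloor n/2\rfloor - d/2 < k \le \lfloor n/2\rfloor\}$; if $d$ is odd (so $n$ is even), take $K$ to consist of the diameter difference $n/2$ together with the $d-1$ next-largest symmetric pairs. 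In either case one checks directly that every vertex has degree $d$, so $S_{n,d} \in R_{n,d}$.

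\textbf{Counting crossings.} Because the vertices are in convex position, two straight-line edges cross iff their four endpoints alternate around the $n$-gon. So the crossing count equals the number of ordered $4$-tuples $(a,b,c,d)$ with $0 \le a<b<c<d \le n-1$ such that both $v_av_c$ and $v_bv_d$ are edges, i.e., $c-a \in K$ and $d-b \in K$. I would fix an unordered pair of positive differences $\{k_1, k_2\}$ from $K$ and, by translating, count the number of pairs of edges of these two difference classes that cross. A short case analysis (fixing one edge of difference $k_1$ and sliding the second edge of difference $k_2$ around the polygon) shows the number of crossing pairs with prescribed differences $k_1, k_2$ is a simple polynomial in $k_1, k_2, n$.

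\textbf{Summation.} Summing over all pairs $\{k_1,k_2\} \subseteq K$ (and adding the contribution of pairs of edges sharing the same difference) reduces the problem to evaluating $\sum k_i$, $\sum k_i^2$, and similar power sums over $K$. Since $K$ is an arithmetic interval of length $\lfloor d/2\rfloor$ (plus possibly the singleton $\{n/2\}$), these power sums have closed forms, and after routine algebraic simplification they collapse to $\frac{nd}{24}(3nd-2d^2-6d+2)$.

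\textbf{Sanity checks and main obstacle.} At $d=2$ and $n$ odd the formula yields $\tfrac12 n(n-3)$, matching the known star drawing of $C_n$; at $d=n-1$ and $n$ even it yields $\binom{n}{4}$, matching the complete graph result of \cite{complete1}. The main technical obstacle is the careful bookkeeping when $d$ is odd: edges of difference $n/2$ form an orbit of size only $n/2$ instead of $n$, so they must be handled separately both in counting how many edge pairs share that difference and in counting crossings between a diameter edge and a non-diameter edge. Once this case is isolated, the remaining sum is a clean double sum over an arithmetic progression and the claimed bound drops out.
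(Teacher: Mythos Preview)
Your construction is exactly the paper's generalized star drawing $S_{n,d}$: place the vertices in convex position and keep only the $\lceil d/2\rceil$ longest chord classes. Where you diverge is in the crossing count. The paper counts \emph{subtractively}: it starts from the convex drawing of $K_n$, which has exactly $\binom{n}{4}$ crossings, and then computes that deleting all diagonals of length $i$ (after the shorter ones are already gone) removes $n(i-1)(n-2i)$ crossings; summing over $i=1,\dots,k-1$ with $k=\tfrac12(n-d+1)$ and simplifying gives the stated bound. You instead count \emph{additively}, fixing a pair of chord lengths $\{k_1,k_2\}\subseteq K$ and summing the number of alternating $4$-tuples. Both routes are valid. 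The paper's subtractive approach is slicker in one respect: because only the \emph{short} diagonals are being deleted, the diameter class never enters the calculation, so the $d$ odd / $d$ even case split you flag as your ``main obstacle'' simply does not arise. Your direct count is more self-contained (it does not appeal to the $\binom{n}{4}$ fact for $K_n$), but it costs you that extra bookkeeping for the half-orbit of diameters and the evaluation of several power sums rather than a single telescoped sum.
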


\begin{proof}
Consider a rectilinear drawing of $K_n$ where the vertices are arranged as those of a convex $n$-gon.  Step by step we delete all diagonals of lengths $1,2, \ldots, k-1$.  We proceed by counting the number of crossings we remove from the drawing by now deleting the $n$ diagonals of length $k$. There are $k-1$ vertices in one of the halfplanes each of the diagonals of length $k$ divides the drawing into.  Each of these vertices will have $n-1-(2(k-1))=n-2k+1$ edges emanating from it which intersect the original diagonal of length $k$.  However, each diagonal of length $k$ intersects $2(k-1)$ other diagonals of length $k$.  Since these crossings are counted twice, we find that there are $n(k-1)$ crossings between diagonals of length $k$.  These are also counted twice in the sum $n(k-1)(n-2k+1)$ and thus we only remove $n(k-1)(n-2k+1)-n(k-1)=n(k-1)(n-2k)$ crossings in deleting all diagonals of length $k$ provided all shorter diagonals have been previously deleted.  Therefore we obtain
        
$$\overline{CR}(R_{n,d}) \geq \binom{n}{4} - \displaystyle\sum_{i=1}^{k-1} n(i-1)(n-2i)$$
$$=\binom{n}{4} - \frac{1}{6}n(k-1)(k-2)(3n-4k).$$

After these deletions there are $d=n-1-2(k-1)$ edges emanating from each vertex.  Substituting $k=\frac{1}{2}(n-d+1)$ into the closed form of the sum above we obtain the desired result.  We call this drawing of $K_n$ without the diagonals of lengths $1$ through $k-1=\frac{1}{2}(n-d-1)$ the \textit{generalized star} drawing of $S_{n,d}$ in $R_{n,d}$ (see Figure 1).

\begin{figure}[ht]
\begin{center}
\includegraphics[scale=.20]{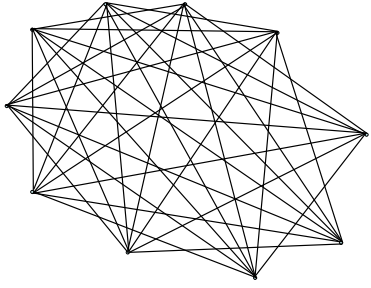}
\caption{The generalized star drawing of $S_{10,7}$ in $R_{10,7}$.}
\end{center}
\end{figure}

\end{proof}

\subsection{Lower bound of $\overline{CR}(R_{n,d})$ where $n \equiv d \equiv 0 \pmod 2$}

The number of crossings in the special rectilinear {\it star-like} drawing implies the following lower bound for $\overline{CR}(R_{n,d})$ where $n \equiv d \equiv 0 \pmod 2$.

\begin{prop}

$$ \overline{\rm CR}(R_{n,d}) \geq \begin{cases}
\frac{1}{24}nd(3nd-2d^2-6d-1) & \text{if $n \equiv d \equiv n/(n,k) \equiv 0 \pmod 2, $ }\\ \\
\frac{1}{24}nd(3nd-2d^2-6d-1)\\ - \frac{1}{4}(n,k)(2d-3) & \text{if $n \equiv d \equiv 0 \pmod 2$} \\ & \text{and   $n/(n,k)\equiv 1 \pmod 2 $} 
\end{cases} $$

\text{where $k=\frac{1}{2}(n-d)$.}

\end{prop}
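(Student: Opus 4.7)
The plan is to mimic the construction in the proof of Proposition~2.1. Begin with the convex $n$-gon drawing of $K_n$, which has $\binom{n}{4}$ crossings, and delete all diagonals of lengths $1,2,\ldots,k-1$; by the computation in Proposition~2.1 this removes $\tfrac{1}{6}n(k-1)(k-2)(3n-4k)$ crossings and yields a $(d+1)$-regular drawing. To reach a $d$-regular graph we must then delete one edge at each vertex, i.e.\ a perfect matching $M$ of $V(K_n)$, chosen so as to minimize the further loss of crossings.

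In the current $(d+1)$-regular drawing, each length-$k$ diagonal has exactly $(k-1)(d+1)$ crossings with the remaining edges (every short-side vertex sends all $d+1$ of its remaining edges to the long side, exactly as in Proposition~2.1). Hence deleting $M$ destroys precisely $\sum_{e\in M}(\text{crossings of }e)-X$ crossings, where $X$ counts the crossing pairs internal to $M$. To minimize this we want $M$ to consist of length-$k$ diagonals insofar as possible and to have $X$ as large as possible.

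View the length-$k$ diagonals as a $2$-regular graph on $V(K_n)$; it splits into $g=(n,k)$ cycles of common length $n/g$. When $n/g$ is even each such cycle admits a perfect matching along alternate edges, and their union gives a perfect matching $M$ of $V(K_n)$ using only length-$k$ diagonals. A careful bookkeeping of the cyclic pattern (using that every length-$k$ diagonal crosses exactly $2(k-1)$ other length-$k$ diagonals) shows that in $M$ each edge has exactly $k-1$ crossings with the other edges of $M$, so $X=n(k-1)/4$. Substituting this and $d=n-2k$ into $\binom{n}{4}-\tfrac{1}{6}n(k-1)(k-2)(3n-4k)-\tfrac{n}{2}(k-1)(d+1)+X$ simplifies to the first stated expression. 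When $n/g$ is odd a pure length-$k$ matching does not exist, so in each cycle we pick a near-matching leaving one vertex exposed, and pair the resulting $g$ exposed vertices (note $g$ is even, since $n$ is even and $n/g$ odd) by $g/2$ length-$(k+1)$ diagonals, which survive step~2. Each length-$(k+1)$ diagonal has $k(d+1)-2$ crossings in the current drawing, because the two outermost short-side vertices each ``waste'' one incident edge on an endpoint of the diagonal rather than on the long side. Substituting $g/2$ length-$k$ matching edges by $g/2$ such length-$(k+1)$ repair edges then accounts, once the analogue of $X$ is computed, for the additional penalty $\tfrac{1}{4}(n,k)(2d-3)$.

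The main obstacle is the precise evaluation of $X$, and of its analogue in the second case. Because $M$ is highly structured rather than random, one must verify that the alternating choice inside each cycle genuinely picks up exactly half of the $2(k-1)$ length-$k$ diagonals crossing any given member. In the odd subcase one must additionally arrange the cycle near-matchings so that the $g$ uncovered vertices are jointly pairable by length-$(k+1)$ edges, a compatibility condition that requires a little case work but is always satisfiable. Once these two combinatorial counts are pinned down, the reduction to the closed forms in the proposition is routine algebra, and the resulting configurations are declared the star-like drawings of $S_{n,d}$.
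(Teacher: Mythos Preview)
Your proposal is correct and follows essentially the same construction as the paper: start from the generalized star $S_{n,d+1}$, then delete a perfect matching consisting of alternate length-$k$ diagonals in each of the $(n,k)$ cycles when $n/(n,k)$ is even, and near-matchings repaired by $\tfrac{1}{2}(n,k)$ length-$(k{+}1)$ diagonals when $n/(n,k)$ is odd. Your organization of the crossing loss as $\sum_{e\in M}(\text{crossings of }e)-X$ is a tidy bookkeeping device, and your counts $(k-1)(d+1)$ for a length-$k$ edge, $k(d+1)-2$ for a length-$(k{+}1)$ edge, and $X=\tfrac{n}{4}(k-1)$ in the even subcase all agree with the paper's computation; the remaining verifications you flag (that every matching edge meets exactly $k-1$ others of $M$, and that the exposed vertices can indeed be paired by length-$(k{+}1)$ diagonals since adjacent cosets mod $(n,k)$ differ by $1$) are exactly the details the paper also leaves implicit.
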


\begin{proof}

For $n \equiv d \equiv 0 \pmod 2$ we use the generalized star drawing of $S_{n,d+1}$ for $d+1 = n-2k+1$ and delete one edge at each vertex to obtain a star-like drawing of $S_{n,d}$ with $d=n-2k$ (an even number).  The diagonals of length $k$ in $S_{n,d+1}$ determine $(n,k)$ cycles each of order $n/(n,k)$.

If $n/(n,k) \equiv 0 \pmod 2$ we can delete every second edge of every cycle (see Figure 2).  In removing these edges we remove $$\frac{1}{2}n(k-1)(n-2k+1) - \frac{1}{4}n(k-1)=\frac{1}{2}n(k-1)(n-2k+\frac{1}{2})$$ edge crossings from the drawing.  Subtracting this from the bound in Proposition 2.1 it follows that
$$\overline{CR}(R_{n,d}) \geq \binom{n}{4} - \frac{1}{6}n(k-1)(k-2)(3n-4k)- \frac{1}{2}n(k-1)(n-2k+\frac{1}{2}).$$
Substituting $k=\frac{1}{2}(n-d)$ gives the desired result. 

\begin{figure}[h]
\begin{center}
\includegraphics[scale=.20]{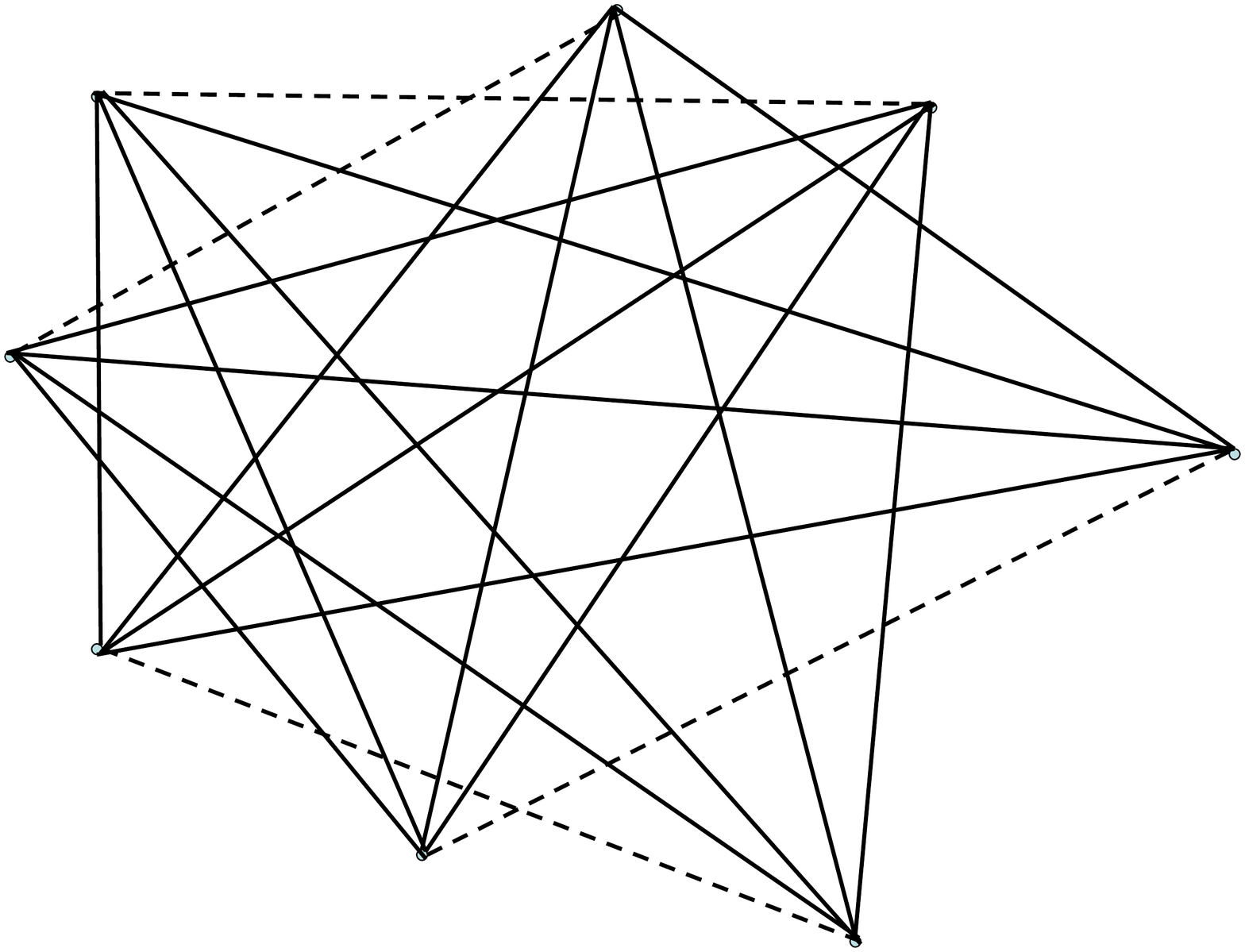}
\includegraphics[scale=.20]{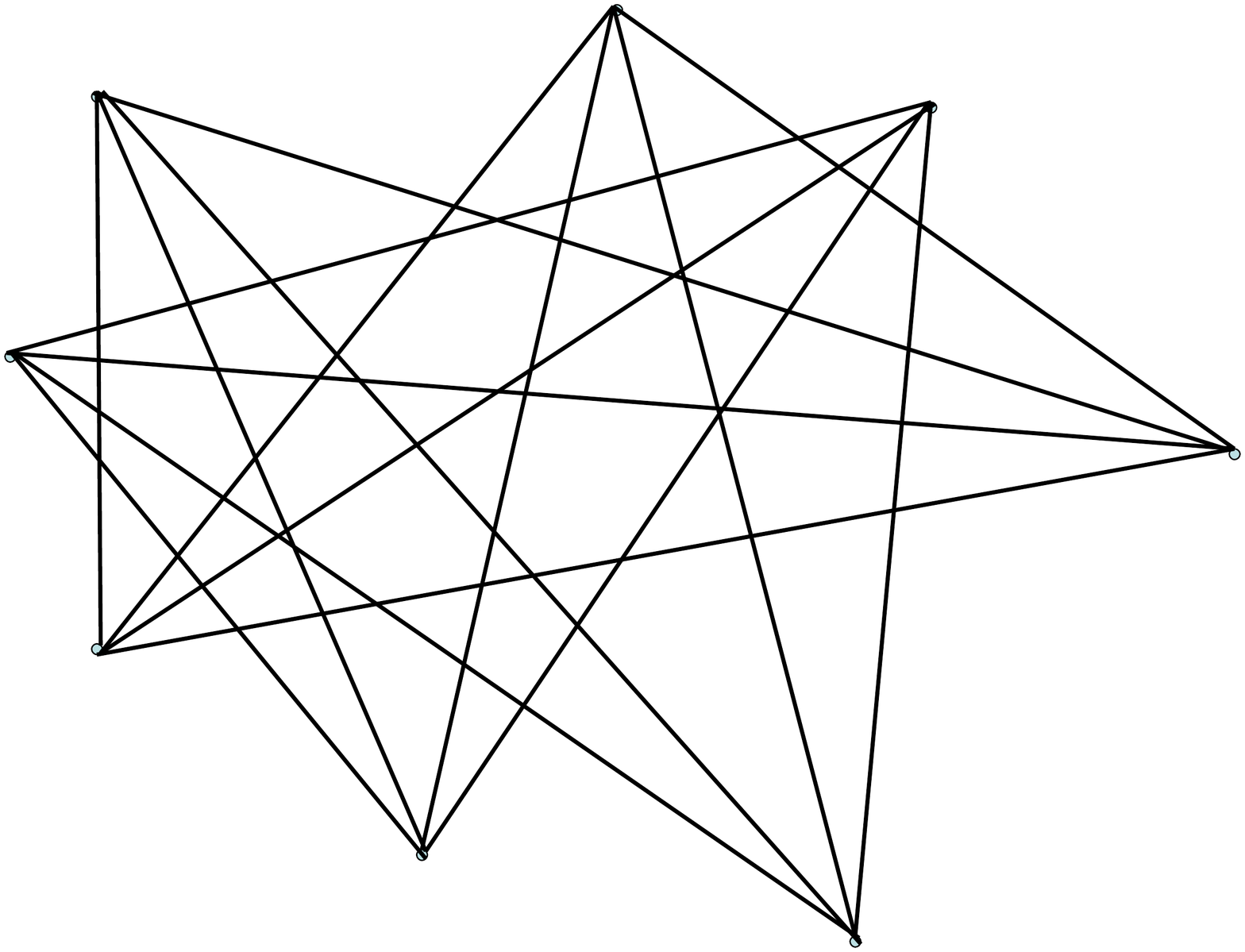}
\caption{In the drawing on the left, every second diagonal in the cycles of order $k=4$ are dashed.   In the drawing on the right those edges are removed yielding a star-like drawing of $S_{8,4}$ in $R_{8,4}$.}
\end{center}
\end{figure}

If $n/(n,k) \equiv 1 \pmod 2$ then the diagonals of length $k$ determine $(n,k) \equiv 0 \pmod 2$ cycles of odd order.  We partition these cycles into $\frac{1}{2}(n,k)$ pairs.  For each pair we delete a diagonal of length $k+1$ connecting two vertices of these cycles.  For each of these diagonals of length $k+1$ we keep the diagonals of length $k$ which emanate from their endpoints and then delete their neighbor edges and every second of the remaining edges within the cycles of order $n/(n,k)$ (see Figure 3).  Thus we remove $\frac{1}{2}(n,k)$ edges of length $k+1$ and $$\frac{1}{2}({n/(n,k)-1})(n,k)=\frac{1}{2}(n-(n,k))$$ diagonals of length $k$.  In removing these edges we remove
$$\frac{1}{2}(n-(n,k))(k-1)(n-2k+1)+\frac{1}{2}(n,k)(k)(n-2k+1)$$ $$ - \frac{1}{2}(n,k)2 - \frac{1}{2}[\frac{1}{2}(n-(n,k))(k-1)+\frac{1}{2}(n,k)k]$$
$$=\frac{1}{2}(n-2k+\frac{1}{2})(kn-n+(n,k))-(n,k)$$    
crossings from the drawing of $S_{n,d+1}$.  It follows that
$$\overline{CR}(R_{n,d}) \geq \binom{n}{4} - \frac{1}{6}n(k-1)(k-2)(3n-4k)$$ $$ -\frac{1}{2}(n-2k+\frac{1}{2})(kn-n+(n,k))-(n,k).$$
Substituting $k=\frac{1}{2}(n-d)$ yields the desired inequality.

\begin{figure}[h]
\begin{center}
\includegraphics[scale=.20]{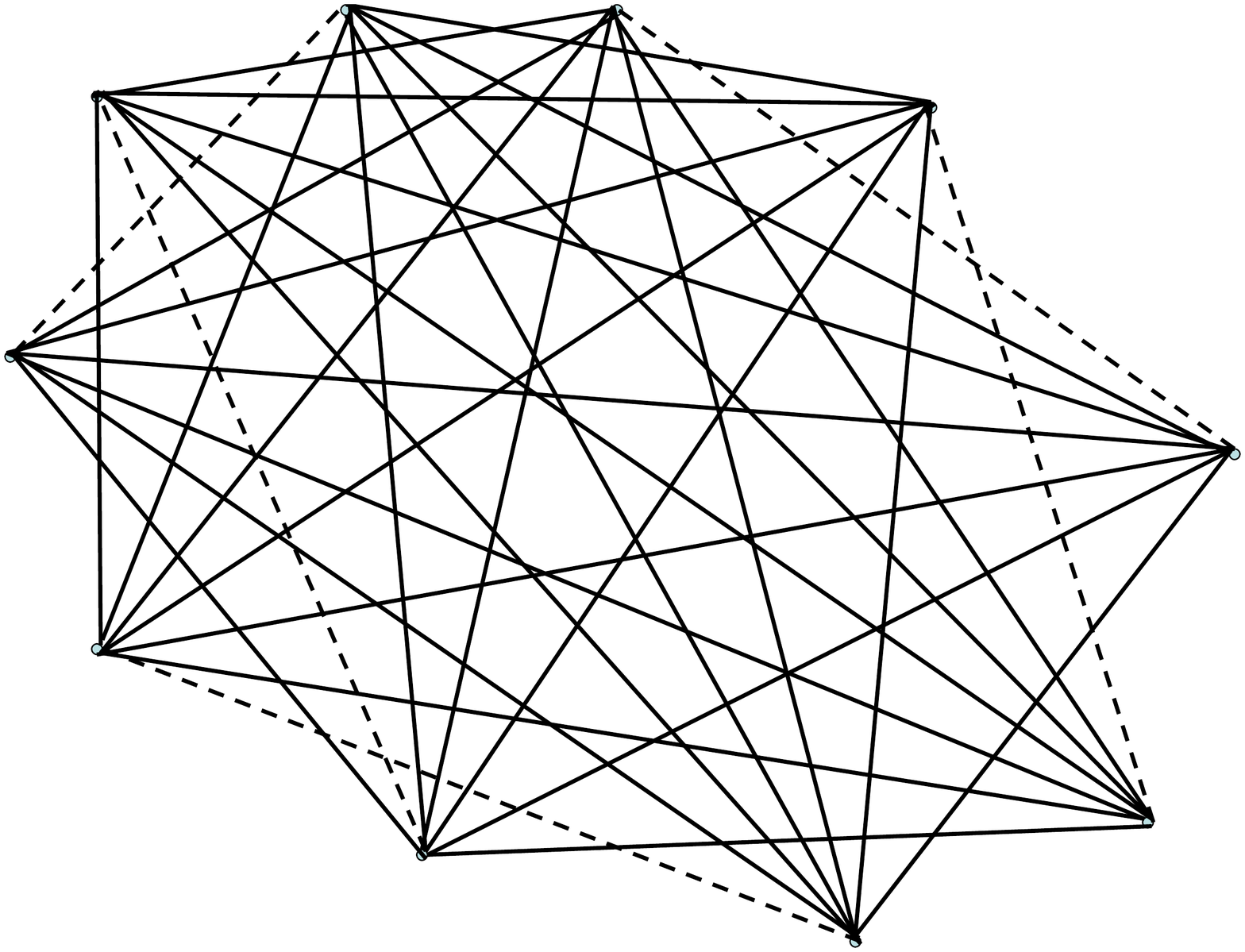}
\includegraphics[scale=.20]{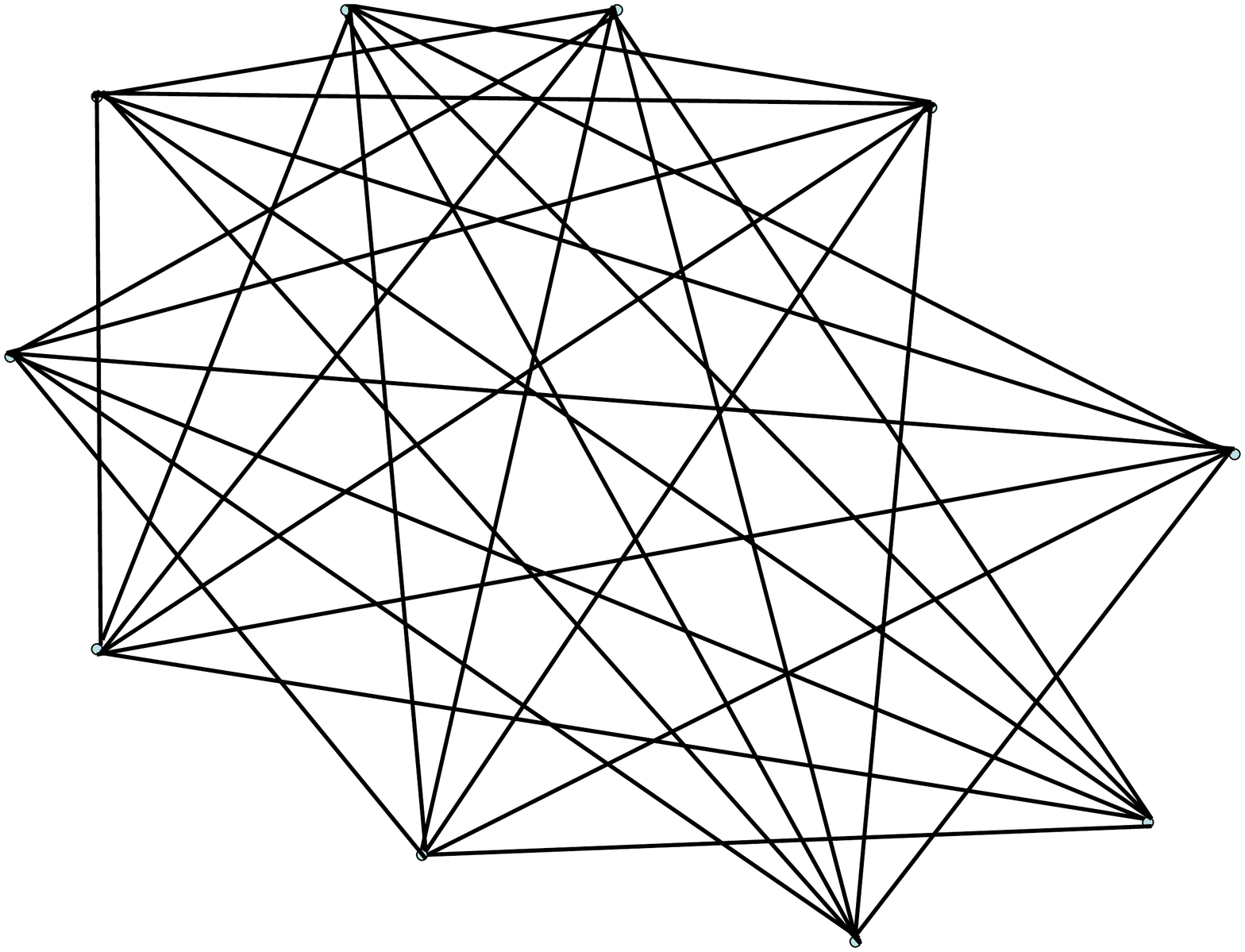}
\caption{In the drawing on the left, $\frac{1}{2}(10,2)=1$ diagonal of length $k+1=3$ has been dashed.  Additionally, every second edge of the cycles $C_5$ emanating from this diagonal's endpoints have been dashed.   In the drawing on the right the dashed edges are removed, yielding a star-like drawing of $S_{10,6}$ in $R_{10,6}$.}
\end{center}
\end{figure}

\end{proof}

\section{Upper Bounds of $\overline{\rm CR}(R_{n,d})$}

In this section we prove that the lower bound obtained in Proposition 2.1 is also an upper bound for $\overline{\rm CR}(R_{n,d})$ where $n+d \equiv 1 \pmod 2$.  In addition we conjecture that the lower bound obtained in Proposition 2.2 is an upper bound and offer a partial result in the direction of this conjecture.

\subsection{Upper bound of $\overline{CR}(R_{n,d})$ where $n+d \equiv 1 \pmod 2$}

The following exact value of $\overline{\rm CR}(R_{n,d})$ will be proved.
\begin{thm} $$ \overline{\rm CR}(R_{n,d}) = \frac{1}{24}nd(3nd-2d^2-6d+2) \ \rm{if}\  n+d \equiv 1 \pmod 2.$$
\end{thm}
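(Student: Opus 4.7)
The lower bound is already established by Proposition 2.1 via the generalized star drawing of $S_{n,d}$. What remains is the matching upper bound: in any rectilinear drawing of any graph $G \in R_{n,d}$ with $n+d \equiv 1 \pmod 2$, the number of crossings is at most $\frac{1}{24}nd(3nd - 2d^2 - 6d + 2)$. My guiding observation is the standard fact that a pair of non-adjacent edges $e,f$ of $G$ crosses in a rectilinear drawing if and only if their four endpoints are in convex position with $e,f$ as the two diagonals of the convex quadrilateral they span; hence the total number of crossings equals the number of 4-subsets $S \subseteq V(G)$ in convex position whose ``diagonal'' matching (induced by the cyclic order of $S$ on its convex hull) is contained in $E(G)$.

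The first step of my plan is to reduce to the convex-position case: I would argue that one may assume all $n$ vertices of the drawing lie in convex position. The intuition is that 4-subsets in ``triangle position'' contribute zero crossings, so rearranging the vertices onto a common convex polygon with an appropriately chosen cyclic order can only increase the number of 4-subsets available to contribute. Once the vertices are convex, labeled cyclically $v_0, v_1, \ldots, v_{n-1}$, edges of $G$ become chords and crossings become interleaving chord pairs. The problem then reduces to a purely combinatorial one: among $d$-regular spanning chord-sets on the convex $n$-gon, find one maximizing the number of interleaving pairs.

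The second step is to show, via an exchange argument, that this convex-position maximum is attained by the generalized star $S_{n,d}$, which consists of exactly the chords of length at least $k = \frac{1}{2}(n-d+1)$. Given any $d$-regular chord-set $H \neq E(S_{n,d})$, the symmetric difference $H \triangle E(S_{n,d})$ has even degree at every vertex and decomposes into edge-disjoint cycles alternating between $H$ and $E(S_{n,d})$. I would show that swapping along such a cycle, replacing the short chords of $H$ by the longer chords of $S_{n,d}$, does not decrease the number of interleaving pairs, since a longer chord participates in more interleavings than a shorter one with the same separating arc structure. Iterating the swap exhausts all short chords and yields $S_{n,d}$, whose crossing count, as computed in Proposition 2.1, is exactly $\frac{1}{24}nd(3nd - 2d^2 - 6d + 2)$.

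I anticipate two main obstacles. The delicate one is verifying crossing-monotonicity of a single swap along an alternating cycle: when the inserted long chord and the removed short chord share an endpoint, the swap changes the set of chords crossed by their common neighbors, and careful bookkeeping (or an inductive length-increase argument along the cycle) is needed to confirm that the net change in crossings is nonnegative. The parity hypothesis $n+d \equiv 1 \pmod 2$ is used here: it makes $k$ an integer and forces the longest chord of $S_{n,d}$ to have length exactly $\lfloor n/2 \rfloor$, sidestepping the half-chord complications that require the separate analysis of $n \equiv d \equiv 0 \pmod 2$ in Section 3.2. The secondary obstacle is making the convex-position reduction rigorous for sparse $G$: for $K_n$ every 4-subset is already convex so the reduction is trivial, but for a general $d$-regular $G$ one must couple the vertex rearrangement with a relabeling that preserves or increases the count of 4-subsets whose diagonals are edges of $G$.
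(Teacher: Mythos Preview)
Your plan diverges sharply from the paper's proof, and its first step---reducing to drawings with all vertices in convex position---is a genuine gap: it is precisely the content of Conjecture~3.7, which the paper states as open. Your heuristic that ``triangle-position'' 4-sets contribute zero crossings, so convexifying can only help, overlooks that a 4-set which was already convex in the original drawing may have its diagonal pair changed under the new global cyclic order; a pair of edges that crossed before can become non-crossing after. There is no reason a single cyclic order on $V(G)$ should preserve the local cyclic orders of all convex 4-sets simultaneously, and the extra freedom of replacing $G$ by another $d$-regular $G'$ does not obviously compensate. (Your aside that for $K_n$ ``every 4-subset is already convex so the reduction is trivial'' is also not the right explanation: what makes $K_n$ easy is that $\binom{n}{4}$ is an upper bound for \emph{any} drawing, convex or not, since each 4-set contributes at most one crossing.)

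The paper avoids this issue entirely by never reducing to convex position. It works with an arbitrary rectilinear drawing and bounds from below the number $M$ of pairs of non-crossing edges forced locally. At a vertex the $d$ incident edges are assigned ``types'' $0,\ldots,D=\lfloor (d-1)/2\rfloor$ according to how the edge line splits the remaining incident edges between the two halfplanes; an edge whose endvertices have types $i$ and $j$ (in the same halfplane) forces at least $i(d-j-1)+j(d-i-1)$ non-crossing pairs among edges adjacent to it. A fairly intricate summation, together with a lemma showing that at every vertex each type between the minimum occurring type and $D$ appears at least twice, yields $M \ge \tfrac{1}{6}nd(d-1)(d-2)$; subtracting $\tfrac12 M$ from the naive bound $\tfrac12\cdot\tfrac{nd}{2}\bigl(\tfrac{nd}{2}-2d+1\bigr)$ then gives the theorem. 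So even if your exchange argument on convex chord-sets can be completed (and that step too is only sketched), you would still need either a proof of Conjecture~3.7 or an independent bound for non-convex drawings before your route closes.
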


\begin{proof}

The lower bound follows from Proposition 2.1, so we proceed by proving that this expression is an upper bound.  Every $d$-regular graph of order $n$ has $\frac{1}{2}nd$ edges.  Every edge can intersect at most $\frac{1}{2}nd-(2d-1)$ other edges.  Thus, a first upper bound is $$ \overline{\rm CR}(R_{n,d}) \leq \frac{1}{2} (\frac{1}{2}nd) (\frac{1}{2}nd-2d+1)=\frac{1}{24}nd(3nd-12d+6). $$ 

Every vertex in a $d$-regular graph is an endvertex for $d$ edges. Let an endvertex be of type $i$ if the edge incident to it divides the drawing of the graph into two halfplanes, one containing $i$ edges emanating from one vertex, and the other containing $d-i-1$ edges emanating from the same vertex (see Figure $4$).  By symmetry we only consider $0 \leq i \leq \lfloor \frac{1}{2}(d-1) \rfloor = D. $
\begin{figure}[ht]
\begin{center}
\includegraphics[scale=.45]{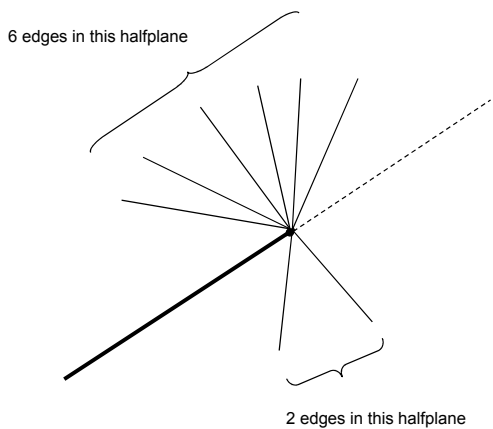}
\caption{The right endvertex of the bold edge is of type $2$ because the \textit{smaller} halfplane determined by this edge contains $2$ edges emanating from this vertex.}
\end{center}
\end{figure}

Let $y_i$ be the number of endvertices of type $i$.  Thus, we have $y_0 + y_1+ \ldots +y_D = dn$.  We call an edge with $i$ edges in a halfplane at one endvertex and $j$ edges in the same halfplane at the other endvertex a type $i,j$ edge.  Let $x_{i,j}$ count the number of type $i,j$ edges (see Figure $5$).   

\begin{figure}[h]
\begin{center}
\includegraphics[scale=.45]{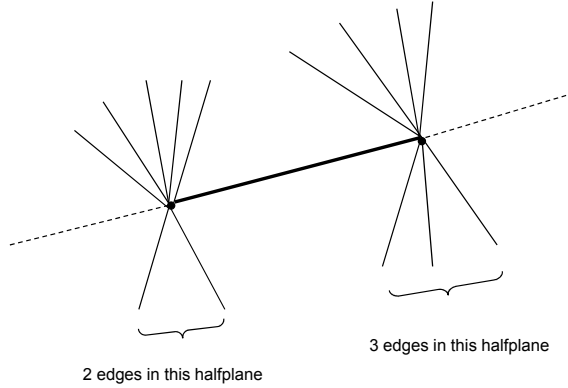}
\caption{The bold edge is a type $2,3$ edge because the left endvertex has $2$ edges emanating from it in the smaller halfplane, and the right endvertex has $3$ edges emanating from it in the same halfplane.}
\end{center}
\end{figure}

Thus, $y_i$ is related to $x_{i,j}$ by the following equation:
\begin{equation}   y_i=2x_{i,i}+\displaystyle\sum_{k=0}^{i-1} x_{k,i} + \displaystyle\sum_{k=i+1}^{D} x_{i,k}.   \end{equation} 

Now, for a type $i,j$ edge, the $i$ edges in the halfplane of one endvertex cannot intersect the $d-j-1$ edges in the opposite halfplane emanating from the other endvertex.  The same holds true for the $j$ edges in the halfplane of one endvertex and the $d-i-1$ edges in the opposite halfplane emanating from the other endvertex.  Therefore, a given type $i,j$ edge determines $i(d-j-1)+j(d-i-1)$ pairs of nonintersecting edges.  A drawing which maximizes the number of edge crossings should minimize the number $M$ of pairs of nonintersecting edges.  Note that it is true that for a given type $i,j$ edge it may be that the $i$ edges from one endvertex and the $j$ edges from the other endvertex will be in different halfplanes.  This will yield $ij + (d-j-1)(d-i-1)$ nonintersecting edges.  However, $i(d-j-1)+j(d-i-1) \leq ij + (d-j-1)(d-i-1)$ when $0 \leq i \leq j \leq D $.  Therefore, the minimum number $M$ of pairs of nonintersecting edges over a drawing of the graph occurs when the $i$ and $j$ edges are arranged so that they lie in the same halfplane.  Thus, we assume that $0 \leq i \leq j \leq D $ and a given type $i,j$ edge always determines $i(d-j-1)+j(d-i-1)$ pairs of nonintersecting edges.  Summing this quantity over all edges of a drawing we obtain $$M=\displaystyle\sum_{i=0}^{D}\displaystyle\sum_{j=i}^{D} [i(d-j-1)+j(d-i-1)]x_{i,j}$$ pairs of nonintersecting edges.   \\ \\
In order to minimize $M$, we begin by multiplying equation ($1$) by $i(d-i-1)$ and subtracting it from $M$ for all values of $i$, yielding \begin{equation}  M= \displaystyle\sum_{i=1}^{D} i(d-i-1)y_i + \displaystyle\sum_{i=0}^{D-1} \displaystyle\sum_{j=i+1}^{D} (j-i)^2 x_{i,j}. \end{equation}

Let $p_{s,t}$ count the number of vertices having endvertices of type $s$ as the smallest type ($ 0 \leq s \leq D$).  The index $t$ counts the number of distinct sequences of endvertex types for a given vertex counted in $p_{s,t}$ ($t \geq 1$).  For example, in a convex drawing of $S_{n,d}$, $p_{0,1}=n$, $p_{0,t}=0$ for $t \geq 2 $, and $p_{s,t}=0$ for $s \geq 1 $.  Then, \begin{equation} n= \displaystyle\sum_{s=0}^{D} \displaystyle\sum_{t \geq 1} p_{s,t}.  \end{equation}  

Note that if the smallest type $s$ of an endvertex is $0$ then the point must be on the convex hull and all such points will have one distinct sequence of endvertex types.  Thus, $p_{0,t}=0$ for $t \geq 2 $.  \\
Let $z_{s,t,i}$ denote the number of endvertices of type $i$ for the $p_{s,t}$ vertices.  It follows that 
\begin{equation} 
 y_i=2p_{0,1} + \displaystyle\sum_{t \geq 1} \displaystyle\sum_{s=1}^{i} z_{s,t,i}p_{s,t} 
\end{equation}   \\
and for odd $d$ we have $$ y_D=p_{0,1} + \displaystyle\sum_{t \geq 1} \displaystyle\sum_{s=1}^{D} z_{s,t,D}p_{s,t}. $$

Additionally, since every vertex has $d$ edges, for a fixed $s$ and $t$ it holds that \begin{equation}   \displaystyle\sum_{i=s}^{D} z_{s,t,i}=d.  \end{equation} \\

Using equations ($3$) and ($4$) we obtain 
\begin{equation} y_i= 2n +  \displaystyle\sum_{t \geq 1}[ \displaystyle\sum_{s=1}^{i} (z_{s,t,i}-2)p_{s,t} - 2\displaystyle\sum_{s=i+1}^{D}p_{s,t}] 
\end{equation}  \\
and respectively, for $d$ odd we have 
$$y_D=n+ \displaystyle\sum_{t \geq 1} \displaystyle\sum_{s=1}^{D} (z_{s,t,i}-1)p_{s,t}.$$ \\
We proceed for $d$ even.  Using equation ($6$) we can rewrite the first part of the expression for $M$ in equation ($2$)  as 
$$\displaystyle\sum_{i=1}^{D} i(d-i-1)y_i=2n \displaystyle\sum_{i=1}^{D} i(d-i-1) + \displaystyle\sum_{t \geq 1} \displaystyle\sum_{i=1}^{D} i(d-i-1)[\displaystyle\sum_{s=1}^{i} (z_{s,t,i}-2)p_{s,t} - 2\displaystyle\sum_{s=i+1}^{D}p_{s,t}].$$
Following a change in the indices of the sums, the right term can be rewritten as $$2n \displaystyle\sum_{i=1}^{D} i(d-i-1) + \displaystyle\sum_{t \geq 1} \displaystyle\sum_{s=1}^{D} p_{s,t}[\displaystyle\sum_{i=s}^{D} i(d-i-1)(z_{s,t,i}-2)-2\displaystyle\sum_{i=1}^{s-1} i(d-i-1)].$$    
This can again be rewritten as \\
$$ 2n \displaystyle\sum_{i=1}^{D} i(d-i-1)+\displaystyle\sum_{t \geq 1} \displaystyle\sum_{s=1}^{D} p_{s,t}[s(d-s-1)\displaystyle\sum_{i=s}^{D} (z_{s,t,i}-2)+ $$ \\  $$\displaystyle\sum_{i=s+1}^{D}(i(d-i-1)-s(d-s-1))(z_{s,t,i}-2)-2\displaystyle\sum_{i=1}^{s-1} i(d-i-1)].$$

Using equation ($5$), it follows that this term is also equal to 
$$2n \displaystyle\sum_{i=1}^{D} i(d-i-1) + \displaystyle\sum_{t \geq 1} \displaystyle\sum_{s=1}^{D} p_{s,t}[C(s,d)+ \displaystyle\sum_{i=s+1}^{D}(i(d-i-1)-s(d-s-1))(z_{s,t,i}-2)]$$

where 
\begin{eqnarray*}
C(s,d)&=& s(d-s-1)(d - \displaystyle\sum_{i=s}^{D} 2) - 2\displaystyle\sum_{i=1}^{s-1} i(d-i-1) \\ 
&=& s(d-s-1)(d-2(D-s+1))- 2\displaystyle\sum_{i=1}^{s-1} i(d-i-1).
\end{eqnarray*}

We now show that $C(s,d)$ is nonnegative for all $s$ and $d$.  First, we have
$$s(d-s-1)(d-2(D-s+1) \geq s(d-s-1)(2s-1).$$  Then 

\begin{eqnarray*}
2\displaystyle\sum_{i=1}^{s-1} i(d-i-1) &\leq& 2\displaystyle\sum_{i=1}^{s-1}(s-1)(d-s) \\
&=& 2(s-1)^2(d-s) \\
&<& s(d-s-1)(2s-2).  
\end{eqnarray*}

Therefore $$C(s,d)>s(d-s-1)(2s-1)-s(d-s-1)(2s-2)=s(d-s-1) \geq 0.$$ Additionally, $(i(d-i-1)-s(d-s-1)) \geq 0 $ for $i,s \leq D=\frac{1}{2}(d-1)$ and $i \geq s+1$.  
Assuming $ z_{s,t,i} -2 \geq 0 $ (which we will prove in the following lemma) then the first half of the expression for $M$ is minimized when $p_{s,t}=0$ for all $s \geq 1$.  \\
Also, accounting for the discrepancy in $y_D$ when $d$ is odd, an analogous summation can be carried out.  Since the term $ z_{s,t,D} -1$ must be carried throughout this summation the expression for $d$ odd is also minimized for $p_{s,t}=0$ for all $s \geq 1$, provided $ z_{s,t,D} -1 \geq 0 $.

\begin{lem} $z_{s,t,i} \geq 2$ for all $s,t,i$, and $z_{s,t,D} \geq 1$ for $d$ odd.
\end{lem}

\begin{proof}
For a given vertex, we begin by proving there is at least one endvertex of type $\frac{1}{2}(d-1)$ for $d$ odd and there are at least two endvertices of type $\frac{1}{2}(d-2)$ for $d$ even.  This statement can be proved by induction from $d$ to $d+1$.  This statement is obvious for $d=2$ and $d=3$, so we begin with the inductive step.  Also, note that in traversing the $d$ edges  incident to a given vertex in a clockwise or counterclockwise manner in moving from edge to edge, edge to extension, extension to edge, and extension to extension, the number of edges in the clockwise following halfplane may change by at most one.  This fact will be used numerous times throughout the proof.   \\
\textbf{Case I:} From odd $d$ to $d+1$. \\
We consider the edge whose endvertex is of type $\frac{1}{2}(d-1)$ in the $d$-regular drawing. When the ($d+1$)st edge is added, this original endvertex will be the first endvertex of type $\frac{1}{2}[(d+1)-2]$.  If the ($d+1$)st edge is added in this edge's clockwise following halfplane then an immediately following edge or edge extension's endvertex will have type $\frac{1}{2}[(d+1)-2]$.  Thus, either this edge or the edge corresponding to this extension's endvertex will be the second endvertex of type $\frac{1}{2}(d-1)$. \\
\textbf{Case II:} From even $d$ to $d+1$. \\  
Consider an edge whose endvertex is of type $\frac{1}{2}(d-2)$ which has $\frac{1}{2}d$ edges in one of its halfplanes and $\frac{1}{2}(d-2)$ in the other.  If the ($d+1$)st edge is added in the halfplane with $\frac{1}{2}(d-2)$ edges then the considered endvertex is of type $\frac{1}{2}[(d+1)-1]$.  If the ($d+1$)st edge is added in the halfplane with $\frac{1}{2}d$ edges then there are $\frac{1}{2}[(d+1)+1]$ edges in this halfplane and $\frac{1}{2}[(d+1)-3]$ edges in the clockwise following halfplane of this edge's extension.  Since the number of edges in the clockwise following halfplane can change by at most one when moving from edge line to edge line (edge ray and edge extension), we find that traversing the graph from the edge with $\frac{1}{2}[(d+1)+1]$ edges in the clockwise following halfplane to the extension with $\frac{1}{2}[(d+1)-3]$ there must occur an edge or extension with $\frac{1}{2}[(d+1)-1]$ edges in the clockwise following halfplane.  Thus, this edge or the edge corresponding to the extension's endvertex is of type $\frac{1}{2}[(d+1)-1]$.  \\ \\
Using this result and the fact that in moving from edge line to adjacent edge line, the number of edges in the clockwise following halfplane may change by at most one, we can prove that there are two endvertices of each type from the minimal type $s$ to the maximal type $D$.  For $d$ odd, we have one endvertex of maximal type $D=\frac{1}{2}(d-1)$.  Traversing the $d$ edges starting and ending with the edge of type $D$ from edge line to edge line we must go down to an edge or an extension with $s$ edges in the clockwise following halfplane, and then back up to one with $D$.  Thus, we find there are at least two of edges or extensions whose endvertices are of each type from $s$ to $D$.  For $d$ even, we have two edges of maximal type $D=\frac{1}{2}(d-2)$.  Traversing the $d$ edges from one of the type $D$ edges to the other must go down to an edge or extension with $s$ edges in the clockwise following halfplane and back up to one with $D$.  Thus, there are at least two edges or extensions whose endvertices are of each type from $s$ to $D$.  It follows that $z_{s,t,i} \geq 2$.
\end{proof}
Going back to the final expression for equation ($2$) we have

$$M=2n \displaystyle\sum_{i=1}^{D} i(d-i-1)  + \displaystyle\sum_{t \geq 1} \displaystyle\sum_{s=1}^{D} p_{s,t} [C(s,d)+ \displaystyle\sum_{i=s+1}^{D}(i(d-i-1)-s(d-s-1))(z_{s,t,i}-2)] $$

\begin{center}
$$ + \displaystyle\sum_{i=0}^{D-1} \displaystyle\sum_{j=i+1}^{D} (j-i)^2 x_{i,j}. $$ 
\end{center}

Since $C(s,d)$, $z_{s,t,i} -2 $, and $(j-i)^2$ are greater than or equal to $0$ we find that this expression is minimized when $p_{s,t}=0$ for $s \geq 1$ and $x_{i,j}=0$ for $i<j$.  Evaluating the initial sum using these conditions we find that for even $d$ we have $$M=2n \displaystyle\sum_{i=1}^{D} i(d-i-1)=\frac{1}{6}nd(d-1)(d-2)$$ pairs of nonintersecting edges, and for odd $d$ we have $$M=2n \displaystyle\sum_{i=1}^{D-1} i(d-i-1) +nD(d-D-1)= \frac{1}{6}nd(d-1)(d-2)$$ pairs of nonintersecting edges. Since every pair of nonintersecting edges can count twice for two intersecting edges (see Figure 6) we can subtract at least $\frac{1}{12}nd(d-1)(d-2)$ from the initial upper bound  to obtain the asserted bound.

\begin{figure}[h]
\begin{center}
\includegraphics[scale=.33]{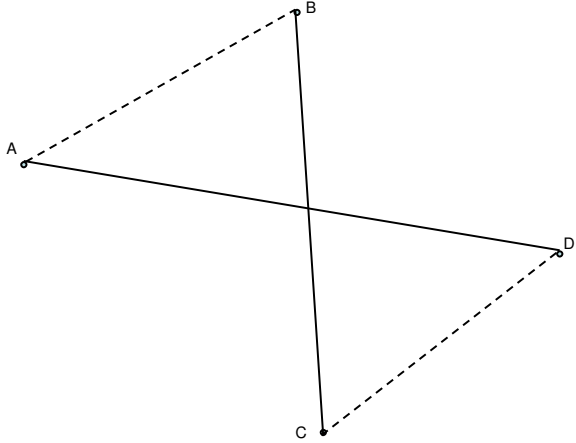}
\caption{Edges $AB$ and $CD$ are a pair of nonintersecting edges determined by both edge $BC$ and edge $AD$}
\end{center}
\end{figure}

\end{proof}

\subsection{Conjecture on the upper bound of $\overline{\rm CR}(R_{n,d})$}
For $n \equiv d \equiv 0 \pmod 2$ we have the following conjecture.
\begin{con}

The bound in Proposition 2.2 is sharp.
\end{con}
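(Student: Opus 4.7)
The plan is to adapt the framework developed in the proof of Theorem 3.1 to the even-even case. The classification of endvertices by type $y_i$, of edges by type $x_{i,j}$, the auxiliary quantities $p_{s,t}$ and $z_{s,t,i}$, the rewriting of the number $M$ of nonintersecting edge pairs as in equation~(2), and the lower bound $z_{s,t,i}\ge 2$ from Lemma~3.2 are all parity-independent, so they carry over verbatim. The initial upper bound $\overline{\rm CR}(R_{n,d}) \leq \frac{1}{24}nd(3nd-12d+6)$ and the nonnegativity of $C(s,d)$ and of the coefficients $(j-i)^2$ still force any minimizer of $M$ to satisfy $p_{s,t}=0$ for $s\ge 1$ and $x_{i,j}=0$ for $i<j$. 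The first task is simply to confirm that this portion of the argument transfers intact.

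The new ingredient demanded by $n\equiv d\equiv 0\pmod 2$ is to quantify an extra parity-forced penalty in $M$. With all vertices convex and all edges of balanced type $(i,i)$, a type-$(i,i)$ edge must be a diagonal of a fixed length $\ell_i$ in the convex $n$-gon. The maximum-type diagonals, of length $k=(n-d)/2$, decompose into $(n,k)$ cycles of length $n/(n,k)$, and a $d$-regular subgraph must select exactly two of these at each vertex. This is precisely the combinatorial split in Proposition~2.2: when $n/(n,k)$ is even the selection on each cycle is a perfect alternation, while when $n/(n,k)$ is odd some length-$k$ diagonals must be traded for length-$(k{+}1)$ ones, and each such trade strictly increases $M$.

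Concretely, I would first show that in the first subcase the convex balanced regime forces $M\ge \frac{1}{6}nd(d-1)(d-2)+\frac{1}{4}nd$, which after the halving described around Figure~6 yields the $\frac{nd}{8}$ reduction built into the conjectured bound. In the second subcase I would carry out the parallel accounting, tracking the extra contribution of each replaced length-$(k{+}1)$ diagonal and showing that the total excess is exactly $\frac{1}{2}(n,k)(2d-3)$, which halves to the correction term $\frac{1}{4}(n,k)(2d-3)$. Both computations should reduce to the same kind of summation performed in Proposition~2.2, just read in the opposite direction (as a lower bound on $M$ rather than a count of removed crossings).

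The main obstacle, and evidently the reason this is left as a conjecture, is the need to rule out nonconvex or unbalanced drawings. The arguments inherited from Theorem~3.1 merely show that each deviation contributes a nonnegative amount to $M$; they do not show the contribution is large enough to overcome the parity penalty one pays for staying convex and balanced. A drawing with a few interior vertices, or with some $x_{i,j}>0$ for $i<j$, could in principle save enough in the $C(s,d)$ and $(j-i)^2$ terms to beat the conjectured bound. Completing the proof appears to require either a strengthened version of Lemma~3.2 in which $z_{s,t,i}-2$ is bounded below in terms of the local parity defect, or a direct structural argument that the realizable $y_i$-profiles of convex balanced drawings are themselves constrained by the same cyclic obstruction. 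This is the delicate quantitative tradeoff that I expect to be the true difficulty.
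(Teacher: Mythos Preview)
The statement is a conjecture, and the paper does \emph{not} prove it. What the paper supplies are two partial results obtained by ad hoc arguments: Proposition~3.4 for $d=2$ (a short counting argument about edges with $n-3$ crossings) and Proposition~3.6 for $d=n-2$ (the trivial $\binom{n}{4}$ bound). There is no attempt in the paper to push the $M$-framework of Theorem~3.1 through the even--even case, so there is nothing to compare your general plan against.

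Your proposal, as you yourself note, is not a proof either; but the gap is more serious than the one you flag. You propose to obtain the missing $\tfrac{nd}{8}$ by showing that in the convex balanced regime $M\ge \tfrac{1}{6}nd(d-1)(d-2)+\tfrac{1}{4}nd$. This is false already for $d=2$: with $D=0$ the defining double sum for $M$ has only the term $i=j=0$, whose coefficient $i(d-j-1)+j(d-i-1)$ vanishes, so $M\equiv 0$ for every drawing of every $2$-regular graph. Yet the conjectured (and, by Proposition~3.4, true) bound for even $n$ is $\tfrac{n}{4}$ below the Theorem~3.1 bound. The forced non-crossings responsible for the even--even deficit are therefore \emph{not} captured by $M$ at all; they come from pairs of non-adjacent edges, which the $M$-machinery never sees. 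Consequently the programme of ``lower-bounding $M$ and halving'' cannot succeed, and the difficulty is not merely the convex/nonconvex tradeoff you isolate at the end.

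A second, smaller issue: even granting convex position and $x_{i,j}=0$ for $i<j$, your assertion that a type-$(i,i)$ edge must be a diagonal of a fixed length $\ell_i$ presupposes that the underlying graph is $S_{n,d}$. The type of an endvertex is determined by the other edges present at that vertex, not by the geometric length of the diagonal, so for a general $d$-regular graph on convex vertices this correspondence need not hold. This would have to be argued separately before the cycle decomposition of the length-$k$ diagonals becomes relevant.
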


We offer a an alternate proof of $ \overline{CR}(R_{n,2})$, originally proven by Furry and Kleitman \cite{cycle1}, as a partial result in the direction of this conjecture.

\begin{prop}
$$ \overline{CR}(R_{n,2}) = \lfloor \frac{1}{4}n(2n-7) \rfloor \ \rm{where}\  n\equiv 0 \pmod 2. $$

\end{prop}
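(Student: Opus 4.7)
My plan is to obtain the lower bound directly from Proposition 2.2 and to prove the upper bound by decomposing the 2-regular graph into its cycle components, bounding intra-cycle and inter-cycle crossings separately via a parity argument on closed polygonal curves.

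For the lower bound, I substitute $d=2$ and $k=(n-2)/2$ into Proposition 2.2. When $n\equiv 0\pmod 4$ one has $\gcd(n,k)=1$ and $n/(n,k)=n$ is even, so the first case gives $\tfrac{1}{24}\cdot 2n(6n-21)=n(2n-7)/4$; when $n\equiv 2\pmod 4$ one has $\gcd(n,k)=2$ and $n/(n,k)=n/2$ is odd, so the second case gives $n(2n-7)/4-\tfrac{1}{2}=\lfloor n(2n-7)/4\rfloor$. Both cases coincide with the claimed value.

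For the upper bound, let $G=C_{n_1}\sqcup\cdots\sqcup C_{n_r}$ with $\sum_i n_i=n$, and let $c$ be the number of crossings in a fixed rectilinear drawing. The intra-cycle crossings in $C_{n_i}$ are at most $\overline{\rm CR}(C_{n_i})$, which equals $n_i(n_i-3)/2$ for $n_i$ odd and $n_i(n_i-4)/2+1$ for $n_i$ even. For two disjoint odd cycles $C_a,C_b$ with $a\le b$, I will establish the sharper bound $ab-b$ on inter-cycle crossings: if $f\in C_a$ and the vertices of $C_b$ split as $p$ and $b-p$ across the line $\ell(f)$, then because $C_b$ is a closed polygonal curve it crosses $\ell(f)$ at most $2\min(p,b-p)\le b-1$ times, using that $b$ is odd; summing over $f\in C_a$ gives $\le a(b-1)$, and the symmetric argument yields $\le b(a-1)$, whose minimum is $ab-b$. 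For pairs in which at least one cycle has even length, the trivial bound $n_in_j$ suffices.

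Writing $E$ for the total number of vertices in even cycles, $\varepsilon$ for the number of even cycles, $O=n-E$, and $A=\sum_{i<j\ \text{both odd}}\max(n_i,n_j)$, the assembled inequality reads $c\le \tfrac{n(n-3)}{2}-\tfrac{E}{2}+\varepsilon-A$, so it suffices to verify $E/2-\varepsilon+A\ge\lceil n/4\rceil$. A short case analysis does this: if no cycle is odd then every even cycle has $\ge 4$ vertices and when $n\equiv 2\pmod 4$ at least one has $\ge 6$; if $\ge 2$ odd cycles are present, a quick count gives $A\ge O/2$, and combined with $E/2-\varepsilon\ge E/4$ this yields $E/2-\varepsilon+A\ge(n+O)/4\ge\lceil n/4\rceil$ (using $O\ge 6$ when $n\equiv 2\pmod 4$). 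Since $n(n-3)/2-\lceil n/4\rceil=\lfloor n(2n-7)/4\rfloor$, the proof is complete. The main obstacle is the parity-based inter-cycle bound for two odd cycles, which forces the $\lceil n/4\rceil$ loss from the naive bound $n(n-3)/2$ and clarifies why a star-like union of short cycles (as in Proposition 2.2) beats a single long cycle for large even $n$.
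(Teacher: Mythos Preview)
Your argument is correct, and the approach is genuinely different from the paper's. The paper gives a very short direct count: an edge with the full $n-3$ crossings must, by a parity argument applied to the union of closed curves, have its two neighbouring edges in \emph{opposite} halfplanes of its supporting line; those two neighbours therefore miss each other and cannot themselves be full, so at most $n/2$ edges attain $n-3$ crossings, yielding $\overline{\rm CR}(R_{n,2})\le\tfrac12\bigl(\tfrac{n}{2}(n-3)+\tfrac{n}{2}(n-4)\bigr)=\lfloor n(2n-7)/4\rfloor$. Your route instead decomposes into cycle components, imports the Furry--Kleitman value of $\overline{\rm CR}(C_m)$ for the intra-cycle contribution, proves the sharpened inter-cycle bound $ab-\max(a,b)$ for two odd cycles via the closed-curve parity $2\min(p,b-p)\le b-1$, and finishes with a short case split on the parity profile of the components. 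The paper's proof is shorter and entirely self-contained (it does not use $\overline{\rm CR}(C_m)$), whereas your proof is more structural and makes explicit why unions of short even cycles beat a single long cycle: the loss of $\lceil n/4\rceil$ from the thrackle bound $n(n-3)/2$ is forced precisely by the parity obstruction you isolate, and the star-like construction of Proposition~2.2 with $C_4$'s (and one $C_6$ when $n\equiv 2\pmod 4$) is exactly what saturates it. One organizational remark: you are invoking the even-cycle value $\overline{\rm CR}(C_m)=m(m-4)/2+1$, which in the paper appears as the separate Proposition~3.5 (the Furry--Kleitman theorem); this is logically fine since that result is independent, but it does mean your proof is not self-contained in the way the paper's is.
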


\begin{proof}

The lower bound follows from Proposition 2.2. Therefore, we proceed by proving the upper bound.  For each edge there is a maximum of $n-3$ nonadjacent edges which it can intersect.  Since $n \equiv 0 \pmod 2$, those edges which have $n-3$ crossings must have neighbor edges in different halfplanes. The two neighbor edges cannot have $n-3$ crossings since these edges cannot intersect each other. Thus there are at most $\frac{1}{2}n$ disjoint edges which may have $n-3$ crossings. It follows that $$ \overline{CR}(R_{n,2}) \leq \frac{1}{2} [ \frac{1}{2}n (n-3) + \frac{1}{2}n(n-4)]= \lfloor \frac{1}{4}n(2n-7) \rfloor. $$

\end{proof}
Note that only for $d=2$ and $n$ even there occur disconnected graphs $S_{n,2}$ in the extremal cases, that is, there are copies of $C_4$ if $n \equiv 0\pmod 4$ and there are copies of $C_4$ and one copy of $C_6$ if $n \equiv 2\pmod 4$.

\subsection{Alternate proof of $\overline{CR}(C_n)$}

In the same vein as the above proof for $ \overline{CR}(R_{n,2})$ we now offer a simpler proof of $\overline{CR}(C_n)$ than that of Furry and Kleitman.  Note that for both $R_{n,2}$ and $C_n$ where $n\equiv 1 \pmod 2$ the proof of the maximum rectilinear crossing number is trivial as both achieve the thrackle bound.

\begin{prop}  
$$ \overline{CR}(C_n) = \frac{1}{2}(n^2-4n+2) \rm{where} n \equiv 0 \pmod 2.$$
\end{prop}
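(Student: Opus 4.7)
The plan splits into lower and upper bounds. For the lower bound, a slight modification of the star-like construction of Proposition~2.2 (or a standalone star-like rectilinear drawing) realises a connected $C_n$-drawing with exactly $\tfrac12(n^2-4n+2)$ crossings, giving $\overline{\mathrm{CR}}(C_n)\ge \tfrac12(n^2-4n+2)$.

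For the upper bound, I would follow the counting template of Proposition~3.3 and sharpen it to the single-cycle setting. First, every edge carries at most $n-3$ non-adjacent crossings, and an edge attaining this (a \emph{max} edge) has its two cyclic neighbours in different halfplanes of its supporting line; those neighbours therefore cannot cross each other and cannot themselves be max. Hence the set $M$ of max edges is independent in $C_n$, of size $m \le n/2$. The refinement I would exploit (unused in Proposition~3.3) is that, for $n \ge 6$, each max edge $e_i$ certifies a \emph{distinct} forced non-crossing non-adjacent pair, namely $(e_{i-1}, e_{i+1})$.

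Writing $\overline{\mathrm{CR}}(C_n) = \tfrac12 n(n-3) - N$ with $N$ the number of non-crossing non-adjacent pairs, the goal becomes $N \ge n/2 - 1$. The $m$ pairs above immediately give $N \ge m$, handling the case $m \ge n/2 - 1$. For $m \le n/2 - 2$ I would combine this with the per-edge bound $\sum c(e) \le m(n-3) + (n-m)(n-4)$ together with a \emph{sandwich} improvement: a non-max edge both of whose cyclic neighbours lie in $M$ is forced to miss crossings with both its distance-$2$ neighbours, so has $c \le n-5$. A short analysis of the cyclic gap structure of $M$ (gaps $g_k \ge 2$ summing to $n$, with $p$ gaps of size $2$) then assembles these bounds into $\overline{\mathrm{CR}}(C_n) \le \tfrac12(n(n-4) + m - p)$, which matches the target whenever $m - p \le 2$.

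The principal obstacle, as I see it, is the ``evenly-spread'' regime where every cyclic gap of $M$ equals $3$: there $m = n/3$, no non-max edge is sandwiched, and the $m$ certified non-crossings fall short of the target $n/2 - 1$ by roughly $(n-6)/6$. Closing this case is the delicate step and will likely require invoking the closed-polygon topology of the single cycle $C_n$ — a feature absent from the disconnected multi-cycle extremal drawings of Proposition~3.3 — to extract the remaining forced non-crossings, for instance via a parity or winding-number argument specific to straight-line closed polygons of even length.
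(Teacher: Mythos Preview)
Your proposal is incomplete, and you acknowledge this: the regime where $m-p>2$ yet $m<n/2-1$ (in particular the all-gaps-equal-$3$ configuration with $m=n/3$, $p=0$) is left unresolved, with only a hope that some winding-type argument will close it. Your per-edge and sandwich bounds are correct as far as they go, but they genuinely cannot finish the job on their own --- the deficit of roughly $(n-6)/6$ forced non-crossings you identify is real, and nothing in your counting framework supplies it.

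The paper sidesteps the entire case analysis by proving a much sharper structural fact directly: in any rectilinear drawing of $C_n$ with $n$ even, \emph{at most two} edges can attain $n-3$ crossings. The argument assumes three max edges $A_1A_2$, $B_1B_2$, $C_1C_2$ (necessarily pairwise crossing, since max edges are pairwise non-adjacent) and traces the cycle starting from $A_1$. Because every edge other than a neighbour must cross all three max edges, each successive vertex of the walk is confined to a specific region of the three-line arrangement; one checks that the walk is trapped alternating between just two of these regions, so it can eventually reach $B_1$ or $B_2$ but can never reach $C_1$ or $C_2$ without producing an edge missing $A_1A_2$ or $B_1B_2$ --- a contradiction. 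With $m\le 2$ established, the bound is immediate:
\[
\overline{\mathrm{CR}}(C_n)\ \le\ \tfrac12\bigl[\,2(n-3)+(n-2)(n-4)\,\bigr]\ =\ \tfrac12(n^{2}-4n+2).
\]
This is exactly the ``closed-polygon topology specific to a single cycle'' that you anticipate needing as a final patch, but deployed at the very start; once $m\le 2$ is known, your gap structure, sandwich improvements, and the $m\ge n/2-1$ versus $m\le n/2-2$ split all become unnecessary.
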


\begin{proof}
The lower bound follows from \cite{cycle1}. Therefore, we proceed by proving the upper bound.  In an even cycle an edge with $n-3$ crossings must have its neighbor edges in different halfplanes.  Assume that we have three such edges.  We label these three pairwise intersecting edges $A_1A_2$, $B_1B_2$, and $C_1C_2$ as shown in Figure $7$.

\begin{figure}[h]
\begin{center}
\includegraphics[scale=.45]{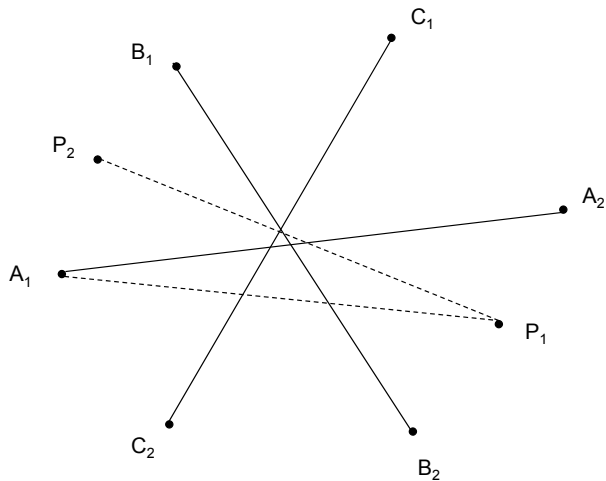}
\caption{The three edges $A_1A_2$, $B_1B_2$, and $C_1C_2$ are assumed to have $n-3$ crossings each.}
\end{center}
\end{figure}

We start at $A_1$.  The other edge incident to $A_1$ must intersect both  $B_1B_2$ and $C_1C_2$.  Thus, without a loss of generality, we may assume that the termination of this edge, $P_1$, must lie in region from $A_2$ to $B_2$.  The next edge must terminate at $P_2$ which must lie in the region from $A_1$ to $B_1$, and $P_3$ must lie again in the region from $A_2$ to $B_2$, and so on, since all three original edges must be intersected by all edges except their neighbor edges.  Eventually, the cycle must close up on itself and $P_{2i-1}$ or $P_{2i}$ must terminate at $B_1$ or $B_2$, respectively, since edges $A_1A_2$ and $C_1C_2$ must be intersected.  Note that the cycle cannot close on $A_2$ or any $P_k$ where $k < i$, because this will result in a disconnected two-regular drawing.  It follows that edge $B_2P_{2i+1}$ or $B_1P_{2i+2}$ must have $P_{2i+1}$ or $P_{2i+2}$ in the region from $A_1$ to $B_1$ or $A_2$ to $B_2$, respectively.  Thus, $C_1$ or $C_2$ can never be reached without forcing one of the edges $A_1A_2$ and $B_1B_2$ to have less than $n-3$ crossings, a contradiction.  It follows that at most two edges can have $n-3$ crossings and thus we obtain $$ \overline{CR}(C_n) \leq \frac{1}{2} [2(n-3)+(n-2)(n-4)]=\frac{1}{2}(n^2-4n+2).$$
 \end{proof}
 
Another partial result in the direction of Conjecture 3.3 is the following proposition.
 \begin{prop}
 $$ \overline{CR}(R_{n,n-2})=\binom{n}{4}$$ for $n$ even.
 \end{prop}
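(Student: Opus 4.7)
The plan is short: combine the trivial convex-hull upper bound with the lower bound from Proposition 2.2 specialized to $d=n-2$.

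For the upper bound, observe that in any rectilinear drawing of any graph on $n$ vertices, each set of four vertices contributes at most one crossing (only when the four points are in convex position, and even then only the single pair of diagonals crosses among the three pairs of opposite edges). Hence $\overline{CR}(G)\le\binom{n}{4}$ for every graph $G$ on $n$ vertices, so in particular $\overline{CR}(R_{n,n-2})\le\binom{n}{4}$.

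For the matching lower bound, apply Proposition 2.2 with $d=n-2$, giving $k=\tfrac{1}{2}(n-d)=1$, hence $(n,k)=1$ and $n/(n,k)=n$ is even. The first case of Proposition 2.2 therefore applies and yields
\[
\tfrac{1}{24}n(n-2)\bigl(3n(n-2)-2(n-2)^2-6(n-2)-1\bigr)=\tfrac{1}{24}n(n-1)(n-2)(n-3)=\binom{n}{4}.
\]
Concretely, the drawing prescribed by Proposition 2.2 starts from the convex-position drawing of $S_{n,n-1}=K_{n}$ (which already attains $\binom{n}{4}$ crossings) and deletes every second edge of the unique outer cycle formed by the length-$1$ diagonals. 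These deleted edges are sides of the convex $n$-gon and take part in no crossings whatsoever, so the resulting $(n-2)$-regular graph (a copy of $K_{n}$ minus a perfect matching of convex hull sides) still achieves $\binom{n}{4}$ crossings.

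There is no significant obstacle here: the upper bound is the generic $\binom{n}{4}$-bound valid for every $n$-vertex graph, while for the lower bound one only needs to observe that the matching deleted from $K_{n}$ consists of edges that carry no crossings, which is immediate for alternating sides of a convex polygon. The two bounds agree and equality is established.
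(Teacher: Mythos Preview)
Your proof is correct and follows essentially the same approach as the paper: the upper bound is the generic ``each $4$-tuple contributes at most one crossing'' observation, and the lower bound is Proposition~2.2 with $d=n-2$, i.e., deleting every second side of the convex $n$-gon drawing of $K_n$. Your write-up supplies more detail (the explicit verification that the formula in Proposition~2.2 evaluates to $\binom{n}{4}$ and the remark that the deleted sides carry no crossings), but the argument is the same.
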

 
\begin{proof}
  
The lower bound follows from Proposition 2.2 where every second
side is deleted from a rectilinear drawing of $K_n$ as a convex $n$-gon.
This bound is sharp since every $4$-tuple of vertices can determine at most
one crossing.
\end{proof}

\subsection{A generalization of previous results}

Theorem 3.1 extends known results regarding the maximum rectilinear crossing number of the cycle and complete graph to the more general class $R_{n,d}$ of $d$-regular graphs where $2\leq d \leq n-1$.  We remark here that when we substitute $d=2$ into Theorem 3.1 we have $$\overline{\rm CR}(R_{n,2})=\overline{\rm CR}(C_n)=\frac{1}{4}(2n)(3(2)n - 2(2)^2-6(2)+2)=\frac{1}{2}n(n-3).$$  This is the same result obtained in \cite{cycle1} for $\overline{\rm CR}(C_n)$ where $n \equiv 1 \pmod 2$.  \\
Additionally, we can substitute $d=n-1$ into Theorem 3.1 yielding 
\begin{eqnarray*}
\overline{\rm CR}(R_{n,n-1})&=&\overline{\rm
CR}(K_n)=\\
&=&\frac{1}{24}n(n-1)[3n(n-1)-2(n-1)^2-6(n-1)+2] =\\
&=&\frac{1}{24}n(n-1)(n-2)(n-3)=\binom{n}{4}.
\end{eqnarray*} \\
This is the same result obtained in \cite{complete1} regarding $\overline{\rm
CR}(K_n)$.

\subsection{Computational results}

The following table shows the values of $\overline{\rm
CR}(R_{n,d})$ for various $n$ and $d$.  Note that the values in bold are the conjectured results.

$$
\begin{tabular}[t]{|l|l|l|l|l|l|l|l|}
\hline
$d \backslash n$ & 4&5&6&7&8&9&10 \\
\hline
2&-&5&7&14&18&27&32\\
\hline
3&1&-&15&-&38&-&70 \\
\hline
4&-&5&15&35&\textbf{52}&81&\textbf{105} \\
\hline
5&-&-&15&-&70&-&150 \\
\hline
6&-&-&-&35&70&126&\textbf{133}\\
\hline
7&-&-&-&-&70&-&210 \\
\hline
8&-&-&-&-&-&126&210 \\
\hline
9&-&-&-&-&-&-&210 \\
\hline
\end{tabular}
$$
\subsection{A general conjecture}
For the determination of the maximum rectilinear crossing number of any graph $G$ it would be very helpful if the following conjecture can be proved.

\begin{con}
The maximum rectilinear crossing number of any graph can be realized in a drawing where all the vertices are vertexpoints of a convex polygon.
\end{con}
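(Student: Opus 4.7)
The plan is to start from a rectilinear drawing $D$ of $G$ that attains $\overline{\mathrm{CR}}(G)$ and deform it into a convex drawing without losing any crossings, by pushing interior vertices outward one at a time. Let $r(D)$ denote the number of vertices of $D$ lying strictly inside the convex hull of $V(D)$; I would induct on $r(D)$, the base case $r(D)=0$ being immediate. For the inductive step, pick an interior vertex $v$ and consider the line arrangement $\mathcal{A}_v$ consisting of all lines through pairs of vertices of $V(D)\setminus\{v\}$ together with the lines supporting the edges of $G$ not incident to $v$. Within each open face of $\mathcal{A}_v$ the combinatorial type of the rectilinear drawing, and hence the crossing count, is constant as $v$ varies over that face. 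The aim would then be to show that some face lying outside the convex hull of $V(D)\setminus\{v\}$ realises at least as many crossings as the face currently containing $v$; relocating $v$ into such a face strictly decreases $r$ and completes the step, after a small perturbation to restore general position.

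The key step is the monotonicity claim. When $v$ crosses a line of $\mathcal{A}_v$ supporting a non-incident edge $e=u_iu_j$, each edge $vw$ incident to $v$ gains or loses its crossing with $e$ according to whether $w$ lies on the opposite side of $\overline{u_iu_j}$ from $v$'s new position. Summed over the neighbours $w$ of $v$, the net change at $e$ has the form (neighbours on one side) minus (neighbours on the other). The goal is to exhibit an outward direction along which the sum of these signed changes, taken over all lines crossed, is nonnegative. A natural first attempt is to push $v$ straight toward a suitable exit face, chosen to maximise the crossing count among the unbounded faces encountered along some outward ray; one can also try to use the symmetry that opposite outward rays produce opposite-sign contributions for those edges whose supporting lines they both cross, so that at least one of the two rays gives a nonnegative cumulative change.

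The main obstacle will be precisely this monotonicity claim: a purely local exchange argument can fail, since traversing a single face boundary may simultaneously gain some crossings and lose others, and the signs need not cooperate along any preselected direction. If the direct approach stalls, I would fall back on a global analysis of the crossing-count function $f(x)$ that places $v$ at $x$ and records the total crossings. As $\|x\|\to\infty$, the vertex $v$ behaves like a point at infinity in the direction $x/\|x\|$, and the crossings incident to $v$ depend only on the cyclic order of the remaining vertices as seen from that direction; an explicit formula in that limit, combined with an argument that no bounded face strictly exceeds every unbounded face, would suffice. A useful preliminary reduction is to apply a projective transformation sending a suitable line to infinity so that the convex hull of $V(D)\setminus\{v\}$ becomes unbounded, shrinking the obstruction to a purely bounded-region problem amenable to direct combinatorial attack.
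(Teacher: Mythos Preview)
The paper does not prove this statement: it is explicitly stated as a \emph{conjecture} (Section~3.6) with no accompanying argument, offered only as a remark that settling it ``would be very helpful'' for the general problem. So there is no proof in the paper to compare your proposal against.

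As for your proposal itself, it is a strategy rather than a proof, and you have correctly located the gap yourself. The induction on the number $r(D)$ of interior vertices and the use of the line arrangement $\mathcal{A}_v$ to partition the plane into faces of constant crossing count are standard and sound. But the decisive claim---that some unbounded face of $\mathcal{A}_v$ carries at least as many crossings as the current face containing $v$---is exactly what remains unproved. Your heuristic that ``opposite outward rays produce opposite-sign contributions'' does not close the gap: the two rays cross different subsets of the lines of $\mathcal{A}_v$, so the sign-symmetry argument only controls the lines that \emph{both} rays meet, not the full change in $f$. Similarly, the fallback suggestion to study $f(x)$ as $\|x\|\to\infty$ tells you the value of $f$ on unbounded faces but gives no mechanism to compare that value to $f$ at an interior maximum; the assertion that ``no bounded face strictly exceeds every unbounded face'' is precisely a restatement of the conjecture for the one-vertex-moving subproblem, not an argument for it. The projective-transformation remark at the end does not reduce the difficulty either, since crossing counts are not projectively invariant.

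In short: the paper offers no proof, and your outline does not constitute one; the monotonicity step is the whole content of the conjecture and is left open.
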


\section*{Acknowledgments}

The authors would like to thank David Garber for fruitful discussions.

\end{document}